\newtheorem{theorem}{Theorem}
\newtheorem{lemma}{Lemma}
\newtheorem{corollary}{Corollary}
\newtheorem{proposition}{Proposition}\theoremstyle{definition}
\newtheorem{example}{Example}
\author{José Juan-Zacarías}
\address{Instituto de Matemáticas Unidad
 Cuernavaca, Av. Universidad s/n. Col. Lomas de Chamilpa Código
 Postal 62210, Cuernavaca, Morelos.}
\email{jose.juan@im.unam.mx} 
\date{October 19, 2018} 
\thanks{\emph{2010 Mathematics Subject Classification:}
30F10, 30D30, 14H52.
\\
\indent \emph{Keywords and phrases:}
Elliptic curves, cross ratios, Hesse normal form.}
\begin{document} 
\title[Elliptic curves and four points]{Some remarks on the 
correspondence between elliptic curves and four points in the Riemann 
sphere} 

\maketitle

\begin{abstract}
 In this paper we relate some classical normal forms for complex 
 elliptic curves in terms of 4-point sets in the Riemann sphere. Our 
 main result is an alternative proof that every elliptic curve is 
 isomorphic as a Riemann surface to one in the Hesse normal form. In 
 this setting, we give an alternative proof of the equivalence betweeen 
 the Edwards and the Jacobi normal forms. Also, we give a geometric 
 construction of the cross ratios for 4-point sets in general position. 
\end{abstract}

\section*{Introduction} A complex elliptic curve is by definition a 
compact Riemann surface of genus 1. By the \emph{uniformization 
theorem}, every elliptic curve is conformally equivalent to an 
algebraic curve given by a cubic polynomial in the form

\begin{equation}\label{Weierstrass-intro}
 E\colon \  y^2=4x^3-g_2x-g_3, \quad \text{with}\ 
 \Delta=g_2^3-27g_3^2\neq 0,
\end{equation}

\noindent this is called the \emph{Weierstrass normal form}. For 
computational or geometric purposes it may be necessary to find a 
Weierstrass normal form for an elliptic curve, which could have been 
given by another equation. At best, we could predict the right changes 
of variables in order to transform such equation into a Weierstrass 
normal form, but in general this is a difficult process.

\par A different method to find the normal form 
(\ref{Weierstrass-intro}) for a given elliptic curve, avoiding any 
change of variables, requires a degree 2 meromorphic function on the 
elliptic curve, which by a classical theorem always exists and in many 
cases it is not difficult to compute. By the Riemann-Hurwitz formula, 
this map has four branch points and, by composing with a Möbius 
transformation, we can assume that one point is at $\infty$ and the 
other points are three complex numbers $\{z_{1},z_{2},z_{3}\}$. Let $C$ 
be the centroid of the points $z_{i}$ and $e_{i}=z_{i}-C$. With this 
points we are able to find the constants $g_{2}$ and $g_{3}$ of 
(\ref{Weierstrass-intro}) as the coefficients of the polynomial

\begin{equation}
	4(x-e_{1})(x-e_{2})(x-e_{3}).
\end{equation}  

\par This motivates the study of another classical normal forms in this 
setting. We are interested in relating the branch points of degree 2 
meromorphic functions on the elliptic curves with 4-point 
configurations derived from a given normal form. This relation 
naturally leads us to the study of the cross ratio of 4-point sets. See 
Example \ref{example-Fermat} and Example \ref{example-Steinmetz} for 
illustrate the above discussion. 

\par Our  main result is an alternative proof that every elliptic curve 
is isomorphic as a Riemann surface to one in the Hesse normal form (see 
Theorem \ref{Theorem-Hesse}). In this setting, we give an alternative 
proof of the equivalence between the Edwards and the Jacobi normal 
forms (see Theorem \ref{Jacobi-Edwards-equivalence}). We give a 
geometric construction of the cross ratios for 4-point sets in general 
position in terms of the angles of certain curvilinear triangles (see 
Figure \ref{curvilinear-triangles} and Figure 
\ref{three-complex-points}). We don't know whether this construction 
was known earlier. 
 
\par This paper is organized as follows: in  Section 
\ref{Section-correspondence} we explain briefly the correspondence 
between elliptic curves and 4-point sets in the Riemann sphere. Also we 
give some remarks on the cross ratio of 4-point sets. In Section 
\ref{Section-Legendre-Weierstrass} we describe the configuration 
associated to the Legendre and the Weierstrass normal forms. In Section 
\ref{Section-Jacobi-Edwards} we describe the configuration of points 
for the Jacobi and the Edwards normal forms and how they are related. 
Finally, in Section \ref{Section-Hesse} we prove the main theorem.

\section{Correspondence between $\mathcal{M}$ and four 
points}\label{Section-correspondence}

Recall that an elliptic curve is a Riemann surface of genus one. We 
denote by $\mathcal{M}$ the \emph{moduli of elliptic curves}, i.e., the 
set of classes of isomorphism of elliptic curves, where by isomorphism 
we mean a biholomorphism between Riemann surfaces.

Every elliptic curve admits a meromorphic function of degree 2 and by 
the Riemann-Hurwitz formula this map has four critical values. 
Reciprocally, given a 4-point set in the Riemann sphere there is a 
degree 2 meromorphic function with critical values on those points. 
The above correspondence is summed in following theorem (see \cite[\S 
6.3.2]{Donaldson}): 

\begin{theorem}\label{the-correspondence}
 There is a bijection between $\mathcal{M}$ and 4-point sets on the 
 Riemann sphere modulo Möbius transformations. 
\end{theorem}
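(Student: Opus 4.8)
The plan is to construct the bijection explicitly from the two directions indicated in the discussion preceding the statement, and then verify that it is well defined in each direction. Starting from a class $[E]\in\mathcal{M}$, I would choose a representative $E$ together with a degree $2$ meromorphic function $f\colon E\to\widehat{\mathbb{C}}$ (which exists by the classical theorem quoted above) and apply the Riemann--Hurwitz formula to conclude that $f$ has exactly four ramification points, each of index $2$; since $f$ has degree $2$ each such point is the entire fibre over its image, so the four images form a $4$-point set $B_f\subset\widehat{\mathbb{C}}$. In the reverse direction, given a $4$-point set $B$ I would build the connected double cover of $\widehat{\mathbb{C}}$ ramified exactly over $B$: such a cover corresponds to the unique surjection $\pi_1(\widehat{\mathbb{C}}\smallsetminus B)\to\mathbb{Z}/2$ sending each of the four standard loops to the nontrivial element, which is consistent because the product of those loops is trivial; after filling in the ramification points one obtains a compact Riemann surface $X_B$, which Riemann--Hurwitz shows has genus $1$.

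The crucial point is the well-definedness of the assignment $[E]\mapsto[B_f]$, where $[B_f]$ denotes the class of $B_f$ modulo Möbius transformations. First I would check independence of the chosen function $f$. A degree $2$ map carries a deck involution $\sigma_f\colon E\to E$ with $f\circ\sigma_f=f$, whose four fixed points are exactly the ramification points. Fixing a group law on $E$, such an involution must have the form $x\mapsto -x+c$ (translations are fixed-point free and the quotient must have genus $0$), so any two degree $2$ maps have deck involutions conjugate by a translation $\tau$ of $E$. This $\tau$ is a biholomorphism intertwining the two coverings and therefore descends to a Möbius transformation of $\widehat{\mathbb{C}}$ carrying one branch locus to the other. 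Independence of the representative is then immediate: a biholomorphism $\psi\colon E\to E'$ preserves critical values, so $f'\circ\psi$ and $f'$ share a branch locus, reducing the question to the previous case.

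It remains to prove that the two assignments are mutually inverse. Surjectivity is handled by the double cover construction, since $X_B$ carries a tautological degree $2$ map with branch locus $B$, whence $[X_B]\mapsto[B]$. For injectivity, suppose $E$ and $E'$ yield branch loci related by a Möbius transformation $m$, and choose degree $2$ maps $f,f'$; then $m\circ f$ and $f'$ are both degree $2$ maps onto $\widehat{\mathbb{C}}$ with the same branch locus, so the uniqueness of the connected double cover ramified over a prescribed $4$-point set presents $E$ and $E'$ as isomorphic covers, and in particular $E\cong E'$ as Riemann surfaces. The main obstacle I anticipate is precisely this uniqueness and classification of degree $2$ maps: one must argue that the complete linear system defining such a map is essentially unique on a genus $1$ curve, with all degree $2$ maps arising from translates of a single system. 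This structural fact is what makes both the well-definedness and the injectivity go through, and I would isolate it as the key lemma underlying the whole argument.
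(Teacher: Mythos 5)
Your argument is correct and follows exactly the route the paper sketches before the theorem (and otherwise defers to Donaldson, \S 6.3.2, without proof): a degree $2$ meromorphic function plus Riemann--Hurwitz in one direction, and in the other the connected double cover branched over the given four points, which is unique because the only surjection $\pi_1(\widehat{\mathbb{C}}\smallsetminus B)\to\mathbb{Z}/2$ nontrivial on all four peripheral loops is the obvious one. The details you supply beyond the paper's two-sentence sketch --- well-definedness and injectivity via the classification of fixed-point involutions of a torus as $x\mapsto -x+c$, hence translation-conjugate, so that any two degree $2$ maps differ by a translation upstairs and a Möbius transformation downstairs --- are exactly the right key lemma and are argued soundly.
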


\par \noindent \textit{Remark.} In this paper we assume that the
complex plane $\mathbb{C}$ has the canonical orientation, the positive 
orientation is the counterclockwise orientation.

\par Now we will discuss a geometric interpretation of the cross ratio 
of 4-point sets when one of them is at $\infty$ and the other three 
complex numbers are not collinear. 

\subsection{The cross ratio and three points in the complex 
plane.}\label{triangle-shapes} Recall that the cross ratio 
$\chi(z_1,z_2,z_3,z_4)$ of four distinct and ordered points 
$(z_1,z_2,z_3,z_4)$ in the Riemann sphere, is the value $\mu(z_4)$ 
where $\mu$ is the unique Möbius transformation which sends orderly the 
three points $(z_1,z_2,z_3)$  to $(0,1,\infty)$. When the four points 
are complex numbers the cross ratio becomes:

\begin{equation}\label{cross-ratio}
 \chi(z_1,z_2,z_3,z_4)=\frac{(z_4-z_1)(z_2-z_3)}{(z_1-z_2)(z_3-z_4)}.
\end{equation}

\par We begin with this elementary remark:

\begin{lemma}\label{elementary-remark}
 Suppose that $\{z_1,z_2,z_3, \infty\}$ and $\{w_1,w_2,w_3,\infty\}$ 
 are two 4-point sets in the Riemann sphere which are Möbius 
 equivalent, then there exists an affine transformation $az+b$ that 
 maps one set to the other.
\end{lemma}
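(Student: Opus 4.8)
The plan is to exploit the fact that affine transformations $az+b$ are precisely the Möbius transformations fixing $\infty$. Since both $4$-point sets contain $\infty$, the only way the hypothesized Möbius equivalence can fail to be affine is that it moves $\infty$; I would remove this obstruction by composing with a suitable symmetry of the target set that pushes the image of $\infty$ back to $\infty$.

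Let $\mu$ be a Möbius transformation with $\mu(\{z_1,z_2,z_3,\infty\})=\{w_1,w_2,w_3,\infty\}$. If $\mu(\infty)=\infty$, then $\mu$ fixes $\infty$, hence has the form $az+b$, and it carries $\{z_1,z_2,z_3\}$ onto $\{w_1,w_2,w_3\}$; this already provides the desired affine map. The remaining case is $\mu(\infty)=w_k$ for some finite $w_k\in\{w_1,w_2,w_3\}$. Here I would construct a Möbius transformation $\sigma$ that permutes the target set $\{w_1,w_2,w_3,\infty\}$ and satisfies $\sigma(w_k)=\infty$. Then $\sigma\circ\mu$ again carries the first set onto the second, but now $(\sigma\circ\mu)(\infty)=\sigma(w_k)=\infty$, so $\sigma\circ\mu$ fixes $\infty$ and is therefore affine, completing the proof.

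To build $\sigma$, write the three finite target points as $\{w_k,w_p,w_q\}$ and define $\sigma$ by the three conditions $\sigma(w_k)=\infty$, $\sigma(\infty)=w_k$, and $\sigma(w_p)=w_q$. A Möbius transformation is determined by its values on three distinct points, so this $\sigma$ exists and is unique. The only step needing genuine justification — and the main obstacle — is to verify that $\sigma$ actually maps the target set to itself, i.e. that $\sigma(w_q)=w_p$ rather than some point outside $\{w_1,w_2,w_3,\infty\}$.

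I would establish this using the invariance of the cross ratio under the double transposition that swaps both pairs simultaneously, namely the identity $\chi(a,b,c,d)=\chi(b,a,d,c)$, which follows directly from formula \eqref{cross-ratio}. Since $\sigma$ preserves cross ratios, applying it to the ordered tuple $(w_k,\infty,w_p,w_q)$ yields $\chi(w_k,\infty,w_p,w_q)=\chi(\infty,w_k,w_q,\sigma(w_q))$, while the symmetry identity gives $\chi(w_k,\infty,w_p,w_q)=\chi(\infty,w_k,w_q,w_p)$. Comparing the two, and using that the cross ratio is injective in its fourth argument once the first three are fixed and distinct, forces $\sigma(w_q)=w_p$. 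Thus $\sigma$ is an involutive symmetry of $\{w_1,w_2,w_3,\infty\}$ interchanging $w_k$ with $\infty$, and the composition $\sigma\circ\mu$ is the required affine map.
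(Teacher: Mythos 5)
Your proposal is correct and follows essentially the same route as the paper: reduce to the case $\mu(\infty)=w_k$ and compose with the Möbius involution realizing the double transposition $(\,w_k\ \infty\,)(\,w_p\ w_q\,)$, whose existence rests on the invariance of the cross ratio under $2+2$ permutations. The only difference is one of detail: the paper merely asserts that such a transformation exists, whereas you construct it explicitly from its values at three points and verify $\sigma(w_q)=w_p$ via the identity $\chi(a,b,c,d)=\chi(b,a,d,c)$ and injectivity of the cross ratio in its fourth argument, which is a correct filling-in of the step the paper leaves implicit.
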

\begin{proof}
 Suppose that $\mu(z)$ is the Möbius transformation which sends 
 $\{z_1,z_2,z_3, \infty\}$ to $\{w_1,w_2,w_3,\infty\}$. If $\mu$ fixes 
 $\infty$ there is nothing to do. If $\mu(\infty)=w_{i_0}$, we can take 
 the Möbius transformation $T$ that interchanges $\infty$ with 
 $w_{i_0}$ and interchange the other points. Such transformation exists 
 because this kind of permutations do not change the cross ratio. Then 
 the affine transformation is $T\circ \mu$.
\end{proof}

The Lemma \ref{elementary-remark} has interesting consequences. First, 
since we can send every point to $\infty$ by a Möbius transformation it 
follows directly from Lemma \ref{elementary-remark} and Theorem 
\ref{the-correspondence} the corollary:

\begin{corollary}\label{correspondence-triangles}
 There is a bijection between $\mathcal{M}$ and the sets of three 
 points on the complex plane $\mathbb{C}$ modulo complex affine 
 transformations.
\end{corollary}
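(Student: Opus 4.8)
The plan is to build an explicit bijection between the $4$-point sets on the Riemann sphere modulo Möbius transformations and the $3$-point sets in $\mathbb{C}$ modulo complex affine transformations, and then to compose it with the bijection furnished by Theorem \ref{the-correspondence}. The natural candidate map sends the affine-equivalence class of a $3$-point set $\{z_1,z_2,z_3\}\subset\mathbb{C}$ to the Möbius-equivalence class of the $4$-point set $\{z_1,z_2,z_3,\infty\}$.

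First I would verify that this map is well defined. An affine transformation $z\mapsto az+b$ (with $a\neq 0$) is in particular a Möbius transformation fixing $\infty$. Hence if $\{z_1,z_2,z_3\}$ and $\{w_1,w_2,w_3\}$ are affine equivalent, the same affine map carries $\{z_1,z_2,z_3,\infty\}$ to $\{w_1,w_2,w_3,\infty\}$, so these two $4$-point sets lie in the same Möbius class and the assignment does not depend on the chosen representative.

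Next I would check surjectivity, which is where the remark preceding the corollary is used: given any $4$-point set $\{p_1,p_2,p_3,p_4\}$ on the Riemann sphere, there is a Möbius transformation sending one of the points, say $p_4$, to $\infty$. Its image then has the form $\{z_1,z_2,z_3,\infty\}$, so every Möbius class contains a representative having $\infty$ among its four points. Such a representative is exactly the image of the affine class of $\{z_1,z_2,z_3\}$, which gives surjectivity.

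Finally I would establish injectivity, and this is the only genuinely delicate point: if $\{z_1,z_2,z_3,\infty\}$ and $\{w_1,w_2,w_3,\infty\}$ are Möbius equivalent, the Möbius transformation realizing the equivalence need \emph{not} fix $\infty$, so one cannot immediately conclude that the finite parts correspond. This is precisely the obstacle removed by Lemma \ref{elementary-remark}, which upgrades the Möbius equivalence to an affine transformation mapping one $4$-point set onto the other. Since an affine map fixes $\infty$, it must send the finite triple $\{z_1,z_2,z_3\}$ onto $\{w_1,w_2,w_3\}$, so the two $3$-point sets are affine equivalent and the map is injective. Composing the resulting bijection with Theorem \ref{the-correspondence} then yields the asserted bijection between $\mathcal{M}$ and the sets of three points in $\mathbb{C}$ modulo complex affine transformations.
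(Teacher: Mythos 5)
Your proposal is correct and follows essentially the same route as the paper, which derives the corollary in one line from Theorem \ref{the-correspondence} together with Lemma \ref{elementary-remark} (using that any point can be sent to $\infty$ for surjectivity, and the lemma for injectivity). You have merely spelled out the well-definedness, surjectivity, and injectivity checks that the paper leaves implicit, correctly identifying Lemma \ref{elementary-remark} as the key step for injectivity.
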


\par \noindent \emph{Remark.} In what follows we assume that our three 
points are not collinear; also we suppose that the triangle formed by 
these points is positively oriented.\\

\par We say that two sets of three points in the complex plane are 
\emph{equivalent} if there is an affine map $az+b$ which maps one set 
to the other. Recall that, by definition, two triangles are similar if 
they have the same angles. And two (oriented) triangles are similar if 
and only if there is an affine transformation $az+b$, with $a,b\in 
\mathbb{C}$ and $a\neq 0$, which maps one triangle to the other. It 
follows from the Corollary \ref{correspondence-triangles} that two sets 
of three points are equivalent if and only if their (oriented) 
triangles are similar.

\par From the above discussion follows the proposition: 

\begin{proposition}
	Let $z_{1}$, $z_{2}$ $z_{3}$ be three non collinear points in the 
	complex plane. The number $\lambda$ is a cross ratios for the 
	4-point set $\{z_1,z_2,z_3,\infty\}$ if and only if the oriented 
	triangle $\{0,1,\lambda\}$ is similar to the oriented triangle 
	$\{z_1,z_2,z_3\}$. Therefore there are generically six equivalent 
	cross ratios of each 4-point set $\{z_1,z_2,z_3,\infty\}$ (see 
	Figure \ref{similar-triangles}).
\end{proposition}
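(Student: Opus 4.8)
The plan is to reduce the statement to the equivalence, already supplied by Lemma~\ref{elementary-remark} and Corollary~\ref{correspondence-triangles}, between Möbius equivalence of $4$-point sets containing $\infty$ and affine equivalence of the remaining three points. The starting observation is that $\lambda$ is literally a cross ratio of the special set $\{0,1,\lambda,\infty\}$: for the ordering $(0,1,\infty,\lambda)$ the Möbius transformation sending $(0,1,\infty)$ to $(0,1,\infty)$ is the identity, so $\chi(0,1,\infty,\lambda)=\lambda$. The whole question thus becomes: \emph{when is $\{z_1,z_2,z_3,\infty\}$ Möbius equivalent to $\{0,1,\lambda,\infty\}$?}

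For the forward direction I would argue as follows. If $\lambda$ is a cross ratio of $\{z_1,z_2,z_3,\infty\}$, it is realized by some ordering $(w_1,w_2,w_3,w_4)$ of the four points. By the definition of cross ratio, the Möbius transformation $\nu$ sending $(w_1,w_2,w_3)$ to $(0,1,\infty)$ sends $w_4$ to $\lambda$, hence $\nu$ carries the set $\{z_1,z_2,z_3,\infty\}$ onto $\{0,1,\lambda,\infty\}$. Both sets contain $\infty$, so Lemma~\ref{elementary-remark} yields an affine map $az+b$ between them; since affine maps fix $\infty$, this map sends $\{z_1,z_2,z_3\}$ onto $\{0,1,\lambda\}$, and by the discussion following Corollary~\ref{correspondence-triangles} (affine equivalence of three-point sets $=$ similarity of oriented triangles) the two oriented triangles are similar.

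The converse runs the same chain backwards: a similarity of the oriented triangles $\{0,1,\lambda\}$ and $\{z_1,z_2,z_3\}$ is an affine map $az+b$, which extends to a Möbius transformation fixing $\infty$ and carrying $\{0,1,\lambda,\infty\}$ onto $\{z_1,z_2,z_3,\infty\}$. Because the cross ratio is a Möbius invariant of the ordered points, and $\lambda$ is a cross ratio of $\{0,1,\lambda,\infty\}$ by the opening remark, $\lambda$ is also a cross ratio of $\{z_1,z_2,z_3,\infty\}$. This closes the equivalence.

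Finally, for the count I would invoke the action of $S_4$ on the $24$ orderings of the four points. The cross ratio is invariant under the Klein four-subgroup of double transpositions, which leave (\ref{cross-ratio}) unchanged, so it takes at most $24/4=6$ values, the familiar orbit $\{\lambda,\,1-\lambda,\,1/\lambda,\,1/(1-\lambda),\,\lambda/(\lambda-1),\,(\lambda-1)/\lambda\}$; these are pairwise distinct away from finitely many special values (such as $\lambda\in\{-1,\tfrac12,2\}$ and the two primitive sixth roots of unity), which is the meaning of ``generically,'' and each value $\lambda_i$ gives by the equivalence an oriented triangle $\{0,1,\lambda_i\}$ similar to $\{z_1,z_2,z_3\}$ as in Figure~\ref{similar-triangles}. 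The one point requiring care is the bookkeeping between unordered sets and ordered tuples: I must read ``$\lambda$ is a cross ratio of the set'' as ``$\lambda=\chi$ for some ordering,'' and confirm that the affine maps produced by Lemma~\ref{elementary-remark} are orientation preserving, which holds automatically since every nonconstant $az+b$ is holomorphic.
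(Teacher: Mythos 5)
Your proof is correct and takes essentially the same route as the paper, which deduces the proposition from the preceding discussion (Lemma~\ref{elementary-remark} reducing Möbius equivalence of $4$-point sets containing $\infty$ to affine equivalence of the finite triples, plus the identification of affine equivalence with similarity of oriented triangles); you have simply made the implicit steps explicit, namely the reduction of $\{z_1,z_2,z_3,\infty\}$ to $\{0,1,\lambda,\infty\}$ via the defining Möbius transformation and the Klein-group count $24/4=6$ for genericity. One incidental remark: your orbit $\{\lambda,\ 1-\lambda,\ 1/\lambda,\ 1/(1-\lambda),\ \lambda/(\lambda-1),\ (\lambda-1)/\lambda\}$ is the correct one, and the paper's list (\ref{equivalent-cross-ratios}) appears to contain a typo, giving $\lambda-1$ and $\frac{1}{\lambda-1}$ in place of $1-\lambda$ and $\frac{1}{1-\lambda}$ (note that $J$ in (\ref{j-invariant}) is invariant under $\lambda\mapsto 1-\lambda$ but not under $\lambda\mapsto\lambda-1$).
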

	
\begin{figure}
 \begin{center}
  \includegraphics[width=10cm]{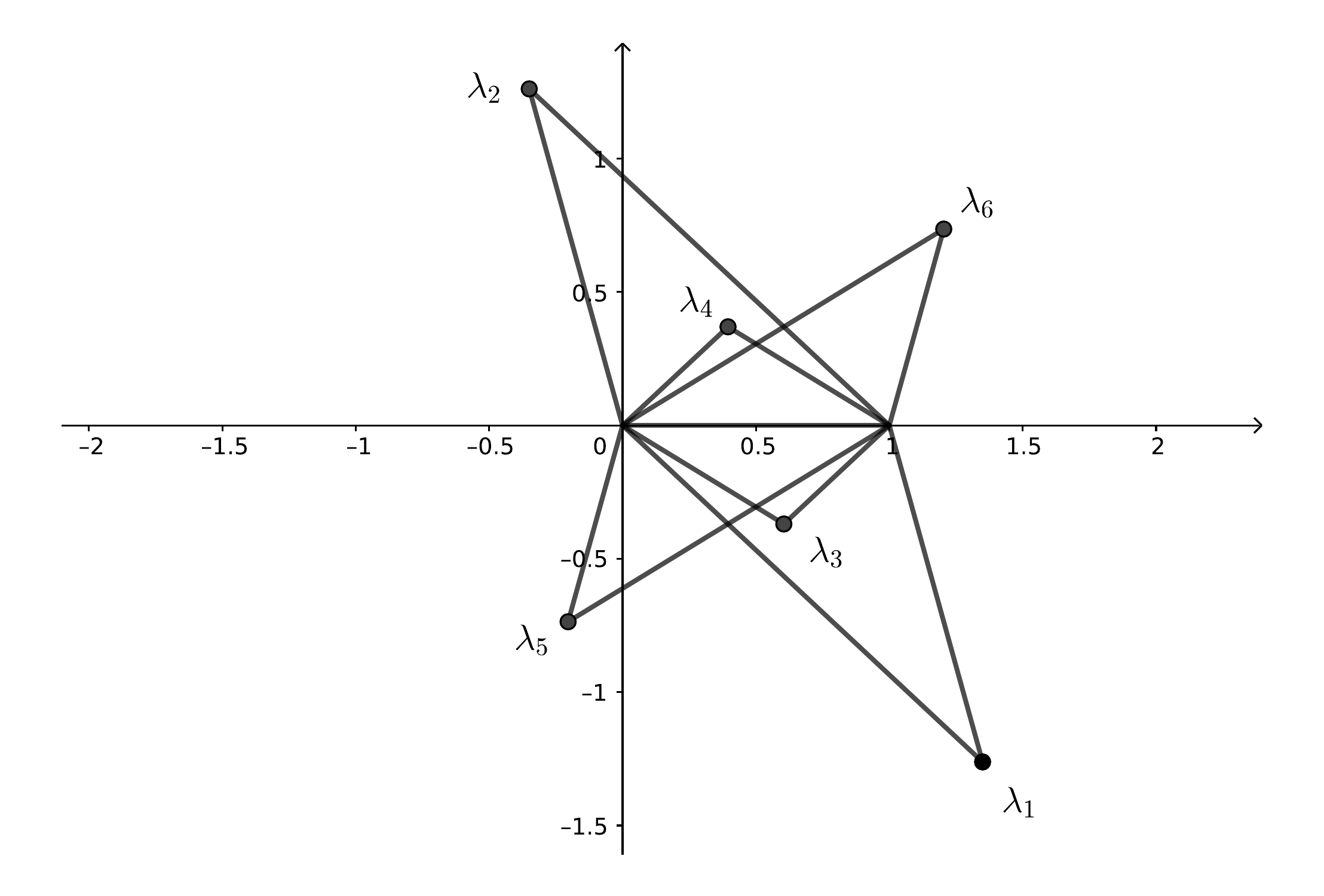}
 \caption{Six similar triangles representing equivalent cross
ratios.}
\label{similar-triangles}
 \end{center}
\end{figure}

\par We will see in Section \ref{Section-Hesse} that for not collinear 
points the only exception is when the oriented triangle 
$\{z_1,z_2,z_3\}$ is equilateral. If $\lambda$ is a cross ratio for 
some 4-point set, the other values are given by 
(\ref{equivalent-cross-ratios}).

\par The proof of Lemma \ref{elementary-remark} tells us how to 
construct a cross ratio for a particular order:

\begin{proposition}
	Let $z_{1}$, $z_{2}$ $z_{3}$ be three non collinear points in the 
	complex plane forming an oriented triangle with respective angles 
	$\alpha,\beta,\gamma$. The cross ratios 
	$\lambda_1=\chi(z_1,z_2,z_3,\infty)$ and 
	$\lambda_2=\chi(z_1,z_2,\infty,z_3)$ can be obtained by the 
	geometric construction illustrated in Figure 
	\ref{cross-ratio-ordered}.
\end{proposition}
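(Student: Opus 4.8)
The plan is to turn the statement into an explicit computation of the two cross ratios and then read off a similar-triangle picture. First I would specialize formula (\ref{cross-ratio}) to the two prescribed orderings, letting the slot occupied by $\infty$ tend to infinity. The two limits are immediate and give
\[
\lambda_1=\chi(z_1,z_2,z_3,\infty)=\frac{z_2-z_3}{z_2-z_1},\qquad \lambda_2=\chi(z_1,z_2,\infty,z_3)=\frac{z_3-z_1}{z_2-z_1}.
\]
Adding these yields the identity $\lambda_1+\lambda_2=1$, so the two points are exchanged by the reflection $w\mapsto 1-w$ through the midpoint of the segment $[0,1]$; this is a convenient bookkeeping device, since it means it suffices to construct $\lambda_2$ and then reflect.

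Next I would interpret each quotient geometrically. Exactly as in the proof of Lemma \ref{elementary-remark}, the affine map $w\mapsto (w-z_1)/(z_2-z_1)$ is the normalization that sends $z_1\mapsto 0$ and $z_2\mapsto 1$, and it carries $z_3$ to $\lambda_2$; being an orientation-preserving similarity, it takes the positively oriented triangle $\{z_1,z_2,z_3\}$ to the positively oriented triangle $\{0,1,\lambda_2\}$. Thus $\lambda_2$ is the apex of a triangle on the base $[0,1]$ similar to the given one, with the angle $\alpha$ sitting at $0$ and the angle $\beta$ at $1$; in particular $\lambda_2$ lies in the upper half-plane. The modulus is then computed by the law of sines and the argument by the orientation, giving
\[
\lambda_2=\frac{\sin\beta}{\sin\gamma}\,e^{i\alpha},\qquad\text{and likewise}\qquad \lambda_1=\frac{\sin\alpha}{\sin\gamma}\,e^{-i\beta},
\]
where for $\lambda_1$ one uses instead the normalization $w\mapsto (w-z_2)/(z_1-z_2)$, which fixes the base endpoints in the opposite order and places the apex below the axis.

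Finally I would match these polar forms to the construction in Figure \ref{cross-ratio-ordered}: draw the segment $[0,1]$, erect at $0$ the ray making angle $\alpha$ with the positive real axis and at $1$ the ray making the interior angle $\beta$, and declare their intersection to be $\lambda_2$; the point $\lambda_1$ is obtained symmetrically below the axis (equivalently, by reflecting $\lambda_2$ in the point $1/2$, which is consistent with $\lambda_1=1-\lambda_2$). The computation above shows these intersection points are exactly $\lambda_1$ and $\lambda_2$. The one place that requires genuine care — and the only real obstacle — is the sign of the arguments, i.e. deciding on which side of the base each apex falls; this is precisely where the standing assumption that $\{z_1,z_2,z_3\}$ is positively oriented is used, and I would pin it down with the signed-area (cross-product) criterion for orientation rather than by appeal to the picture.
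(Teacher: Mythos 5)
Your proposal is correct and follows essentially the route the paper itself takes (the paper relegates the argument to Figure \ref{cross-ratio-ordered} and the preceding similar-triangle proposition): normalizing by the affine map $w\mapsto (w-z_1)/(z_2-z_1)$ is exactly the oriented-similarity characterization of the cross ratio, and your limits $\lambda_1=\frac{z_2-z_3}{z_2-z_1}$, $\lambda_2=\frac{z_3-z_1}{z_2-z_1}$, the identity $\lambda_1+\lambda_2=1$, and the polar forms $\lambda_2=\frac{\sin\beta}{\sin\gamma}e^{i\alpha}$, $\lambda_1=\frac{\sin\alpha}{\sin\gamma}e^{-i\beta}$ all check out, including the orientation bookkeeping placing $\lambda_2$ above and $\lambda_1$ below the real axis. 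Your explicit sign analysis via the orientation hypothesis is a welcome rigorization of what the paper leaves to the figure.
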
 

\begin{figure}
\begin{center}
  \includegraphics[width=12cm]{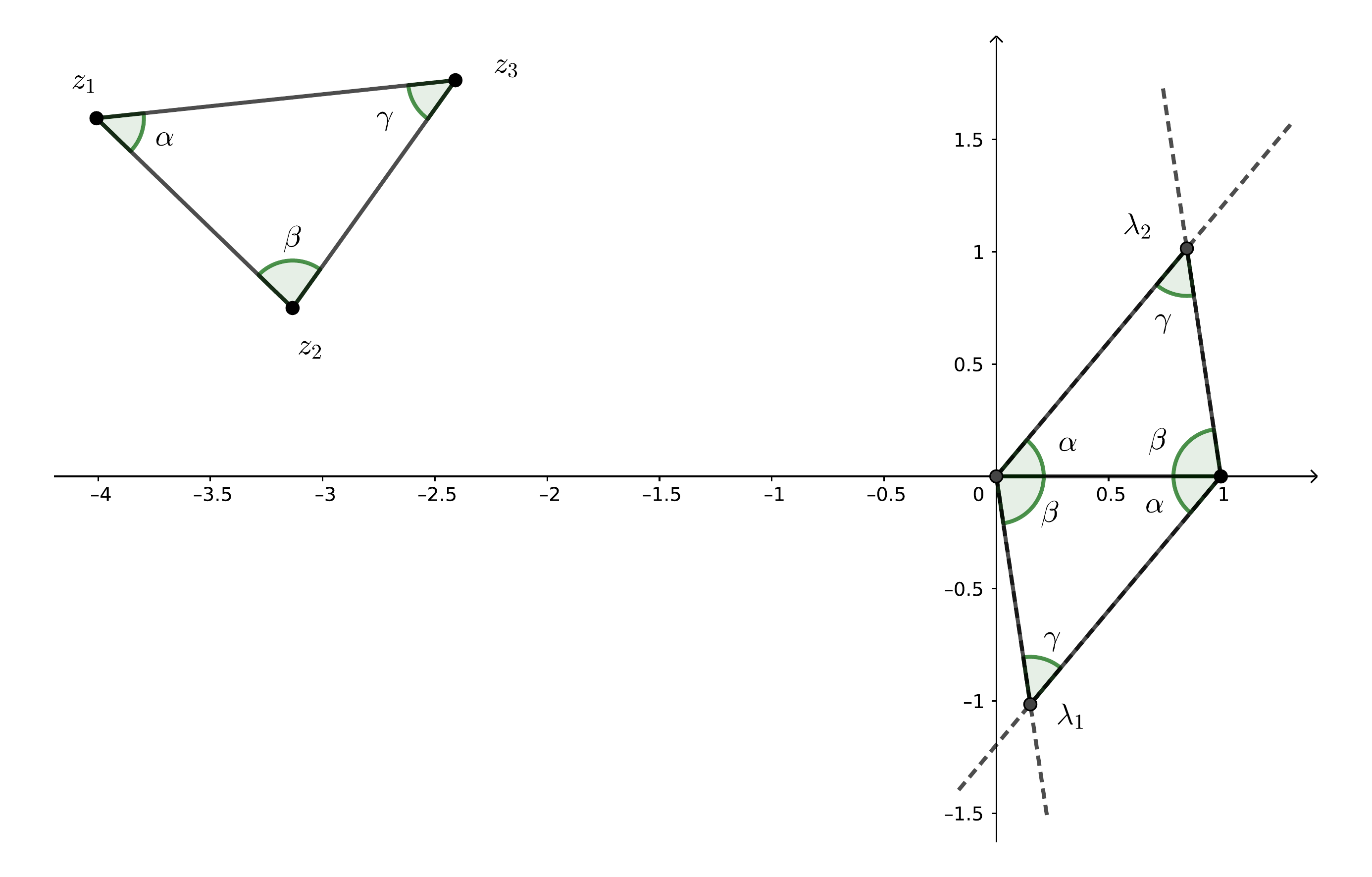}
\caption{Geometric construction of
$\lambda_1=\chi(z_1,z_2,z_3,\infty)$ and
$\lambda_2=\chi(z_1,z_2,\infty,z_3)$.}
\label{cross-ratio-ordered} \end{center} \end{figure}

\par In order to find geometrically the cross ratio for a 4-point set 
in general position in the complex plane we need to find the angles of 
the curvilinear triangles of we call the \emph{shape} of a 4-point set, 
which we describe bellow.

\subsection{The 4-point shape.}\label{shape-four-points} In what 
follows we assume that our four points are in general position, i.e. 
they not lie in a same circle.

\par Let $\{z_1,z_2,z_3,z_4\}$ be a 4-point set in the complex plane, 
for each three points there are exactly one circle passing through 
them, then there are four circles associated to these points. Consider 
the curvilinear triangles formed by arcs of these circles whose 
vertices are in each set of three points of $\{z_1,z_2,z_3,z_4\}$ (see 
Figure \ref{curvilinear-triangles}), we suppose that they are oriented 
(with the positive orientation of $\mathbb{C})$.

\begin{figure}
	\begin{center}
	\includegraphics[width=12cm]{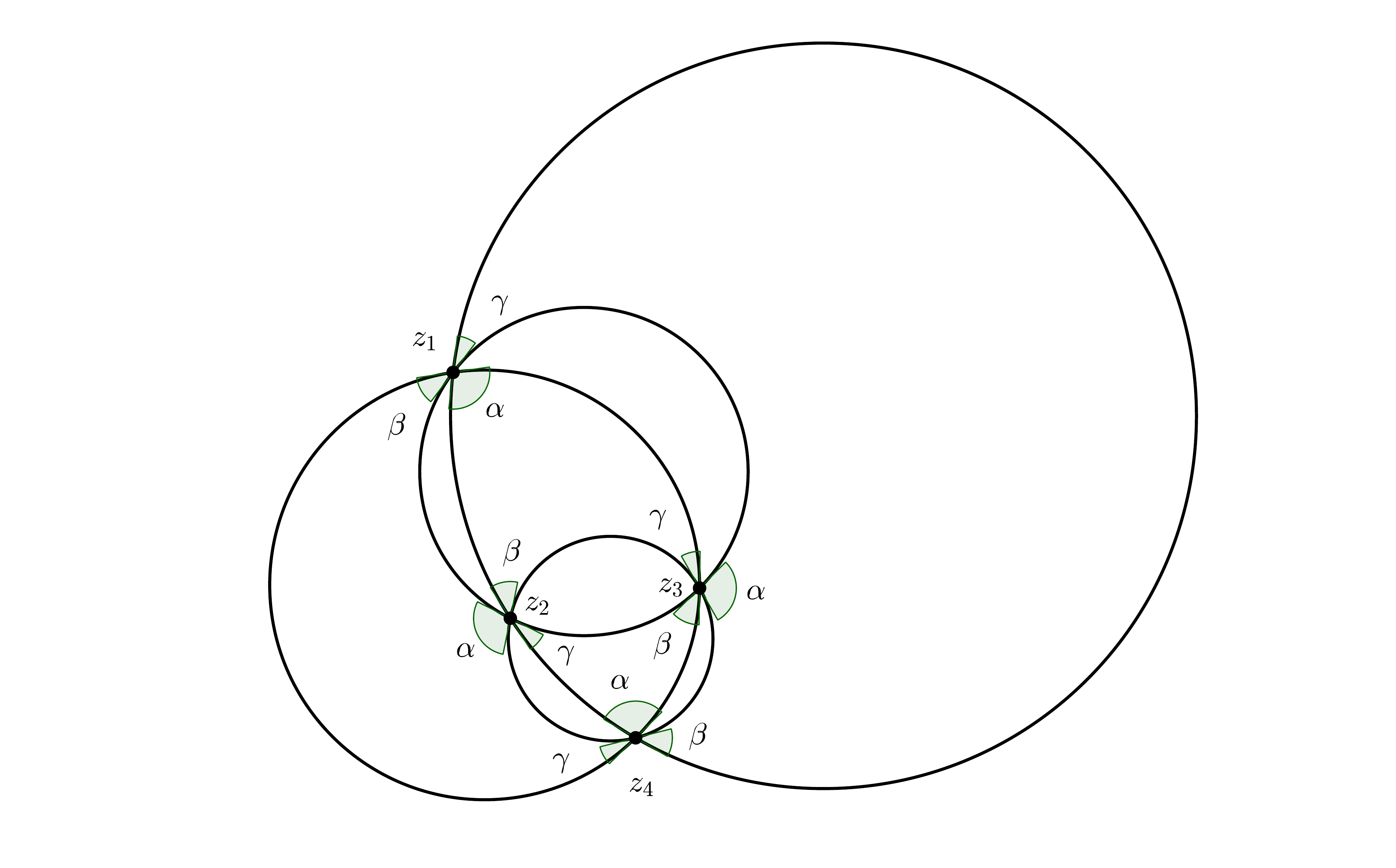}
	\caption{Curvilinear triangles of four points in the complex plane.}
	\label{curvilinear-triangles} 
	\end{center} 
\end{figure}

These four curvilinear triangles have the same angles: to see this, 
observe, like in the Lemma \ref{elementary-remark} that the 
permutations of kind 2+2, i.e, product of two transpositions do not 
change the cross ratio, then there exist a Möbius transformation which 
permutes the four points interchanging a pair of points $z_i$ with 
$z_j$ and interchanging the remainder two. This kind of transformations 
form a group isomorphic to the Klein group. As Möbius transformations 
maps circles to circles conformally, preserving the orientation, this 
group acts transitively in the four triangles. 

\par From the last argument also follows that two equivalent 4-point 
sets in the complex plane have oriented curvilinear triangles with the 
same angles. 

\par If we allow \emph{generalized circles} the same is true when one 
point is $\infty$ and the other are non collinear complex numbers, in 
this case one curvilinear triangle is an euclidean triangle, the other 
triangles have an arc and two lines as its sides (see Figure 
\ref{three-complex-points}). The euclidean triangle is positively 
oriented, which induce an orientation to the other triangles. 

\par Reciprocally, if their oriented curvilinear triangles have the 
same angles they are equivalent, we can see it sending one point to 
$\infty$ and considering the previous case of euclidean triangles. So 
we can define the \emph{shape} of a 4-point set in the Riemann sphere 
as the oriented curvilinear triangles obtained in the before 
construction. Then we have the following theorem:

\begin{theorem}\label{shape-theorem}
	Two 4-point set in general position in the Riemann sphere 
	are Möbius equivalent if and only if their shapes have the same 
	angles (see Figure \ref{curvilinear-triangles} and Figure 
	\ref{three-complex-points}).
\end{theorem}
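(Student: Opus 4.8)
The plan is to prove the two implications separately. The forward direction is essentially a restatement of the conformal invariance already used to define the shape, while the reverse direction is the substantive part and reduces to the case of ordinary Euclidean triangles treated in Subsection \ref{triangle-shapes}.

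For the forward implication, suppose a Möbius transformation $\mu$ carries one 4-point set to the other. Since $\mu$ sends generalized circles to generalized circles and is conformal and orientation-preserving, it carries each of the four defining circles of the first configuration to a defining circle of the second, hence each oriented curvilinear triangle of the first shape to one of the second while preserving the angles at the vertices. Thus the two shapes have the same angles; this is exactly the argument already invoked to show that the four curvilinear triangles of a single configuration share their angles.

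For the reverse implication, I would first use Theorem \ref{the-correspondence} together with the freedom to apply a Möbius transformation so as to normalize one point of each set to $\infty$. After this normalization each set has the form $\{z_1,z_2,z_3,\infty\}$ with $z_1,z_2,z_3$ non-collinear, and the distinguished curvilinear triangle whose sides are arcs of the three circles through the point sent to $\infty$ degenerates into the ordinary Euclidean triangle on $z_1,z_2,z_3$ (those three circles become lines). Because the normalizing map is conformal, the angles of this Euclidean triangle equal the corresponding shape angles and the induced orientation is the positive one; hence the hypothesis that the two shapes have the same angles translates into the statement that the two resulting oriented Euclidean triangles are similar as oriented triangles. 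It then remains to turn this oriented similarity back into Möbius equivalence: by the discussion following Corollary \ref{correspondence-triangles}, two sets of three non-collinear points are equivalent under an affine map $az+b$ precisely when their oriented triangles are similar, and such an affine map extends to a Möbius transformation fixing $\infty$ that identifies the normalized sets $\{z_1,z_2,z_3,\infty\}$. Composing with the two normalizing transformations yields a Möbius transformation carrying the first original set onto the second.

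The step I expect to require the most care is the bookkeeping of orientation in the reverse direction: I must ensure that ``the shapes have the same angles'' is read as equality of the cyclically ordered angle sequences induced by the positive orientation, so that it produces an orientation-preserving (rather than merely orientation-reversing) similarity of the Euclidean triangles, and hence a genuine affine map $az+b$ rather than an anti-affine one. The freedom to choose which of the four points is sent to $\infty$ and how the remaining vertices are matched, guaranteed by the Klein four-group action used earlier, is precisely what makes this matching available.
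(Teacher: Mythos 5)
Your proposal is correct and follows essentially the same route as the paper, which likewise gets the forward direction from the conformality and orientation-preservation of Möbius transformations (the Klein-group argument) and the reverse direction by sending one point of each set to $\infty$ and reducing to the oriented similarity of Euclidean triangles via Corollary \ref{correspondence-triangles}. Your explicit bookkeeping of the cyclically ordered angles merely makes precise a point the paper leaves implicit in its one-line reciprocal argument.
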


\begin{figure}
\begin{center}
 \includegraphics[width=10cm]{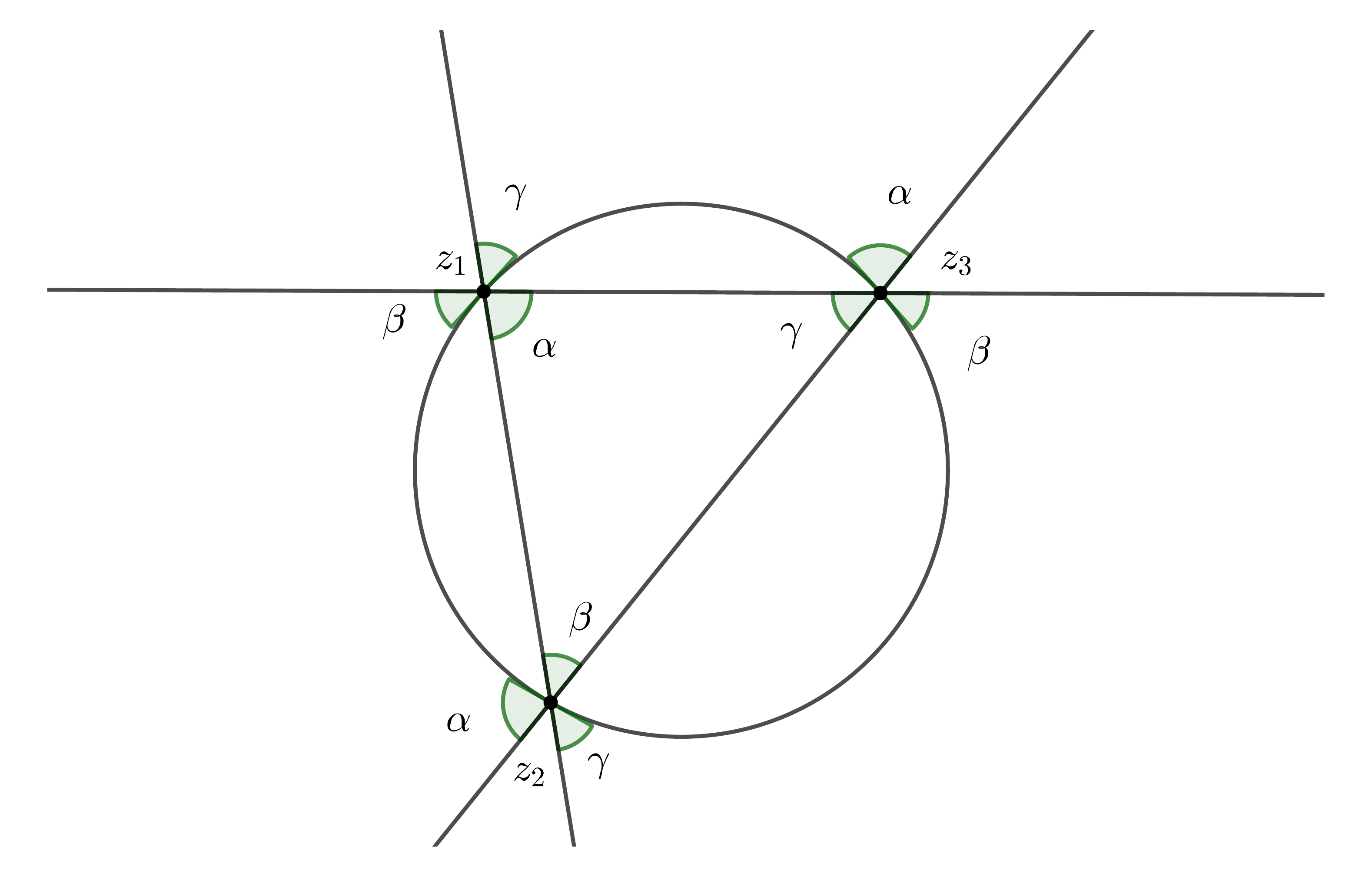}
 \caption{Curvilinear triangles of four points in the Riemann sphere
 with one point at $\infty$.}
\label{three-complex-points}
\end{center}
\end{figure} 

Then, the cross ratios of four points in the complex plane can be 
found geometrically analogously to \ref{triangle-shapes}, we exemplify 
the construction of two of them:

\begin{proposition}
	Let $\{z_1,z_2,z_3,z_4\}$ four points in the complex plane in 
	general position. Consider the oriented curvilinear triangle with 
	vertices in $z_{1}$, $z_{2}$ and $z_{3}$ with respective angles 
	$\alpha,\beta,\gamma$. The cross ratios 
	$\lambda_1=\chi(z_1,z_2,z_3,z_4)$ and 
	$\lambda_2=\chi(z_1,z_2,z_4,z_3)$ can be obtained by the geometric 
	construction illustrated in Figure 
	\ref{cross-ratio-ordered-finite}.
\end{proposition}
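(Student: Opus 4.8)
The plan is to reduce this finite configuration to the three-points-and-$\infty$ case already treated in \S\ref{triangle-shapes} by means of a single Möbius transformation, and then quote the construction of Figure \ref{cross-ratio-ordered}. Since the cross ratio is invariant under Möbius transformations, I would first choose a Möbius map $\mu$ with $\mu(z_4)=\infty$ and set $w_i=\mu(z_i)$ for $i=1,2,3$. Then
\begin{equation*}
 \lambda_1=\chi(z_1,z_2,z_3,z_4)=\chi(w_1,w_2,w_3,\infty),\qquad
 \lambda_2=\chi(z_1,z_2,z_4,z_3)=\chi(w_1,w_2,\infty,w_3),
\end{equation*}
so that both numbers are precisely the cross ratios produced by the construction of Figure \ref{cross-ratio-ordered} applied to the euclidean triangle $\{w_1,w_2,w_3\}$.

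The crux is then to identify the angles of that euclidean triangle with the given $\alpha,\beta,\gamma$, and here I would use that $\mu$ is conformal and maps circles to circles. The three sides of the curvilinear triangle with vertices $z_1,z_2,z_3$ are arcs of the circles $(z_1,z_2,z_4)$, $(z_2,z_3,z_4)$ and $(z_1,z_3,z_4)$, each of which passes through $z_4$; since $\mu(z_4)=\infty$, each such circle is carried to the straight line through the corresponding pair $w_i,w_j$, so the whole curvilinear triangle is sent to the ordinary triangle $\{w_1,w_2,w_3\}$. Because $\mu$ preserves angles (this is essentially the argument already used for Theorem \ref{shape-theorem}), the euclidean triangle $\{w_1,w_2,w_3\}$ has angles $\alpha,\beta,\gamma$ at $w_1,w_2,w_3$. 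Applying the proposition behind Figure \ref{cross-ratio-ordered} to $\{w_1,w_2,w_3,\infty\}$ then produces $\lambda_1$ as the apex erected on the base $\{0,1\}$ with the prescribed angles, and $\lambda_2$ from the companion construction for the order $(w_1,w_2,\infty,w_3)$; transporting the labels back yields the construction of Figure \ref{cross-ratio-ordered-finite}.

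The step I expect to demand the most care is the bookkeeping of orientation and of the correspondence of sides. I must check that the positively oriented curvilinear triangle is sent to a positively oriented euclidean triangle, so that the earlier construction applies with the same cyclic order of vertices; that the arc forming each side is the correct sub-arc of its circle; and that the angle read at each vertex is the interior angle and not its supplement. These are exactly the orientation conventions fixed around Lemma \ref{elementary-remark} and Theorem \ref{shape-theorem}, so once the three circular sides are matched with the three lines $\overline{w_iw_j}$ the identification of $\alpha,\beta,\gamma$ with the euclidean angles is forced, and the proposition follows from the already established $\infty$ case.
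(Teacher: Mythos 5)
Your proposal is correct and is essentially the paper's own (largely implicit) argument: the paper introduces this proposition as finding the cross ratios ``analogously to \ref{triangle-shapes}'' after observing in \ref{shape-four-points} that sending one point to $\infty$ turns the circles through $z_4$ into lines and the curvilinear triangle into a euclidean triangle with the same angles, which is precisely your reduction via $\mu(z_4)=\infty$ followed by the construction of Figure \ref{cross-ratio-ordered}. Your explicit identification of the three sides as arcs of the circles $(z_1,z_2,z_4)$, $(z_2,z_3,z_4)$, $(z_1,z_3,z_4)$ and your attention to orientation simply spell out details the paper leaves to the reader.
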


\begin{figure}
\begin{center}
  \includegraphics[width=12cm]{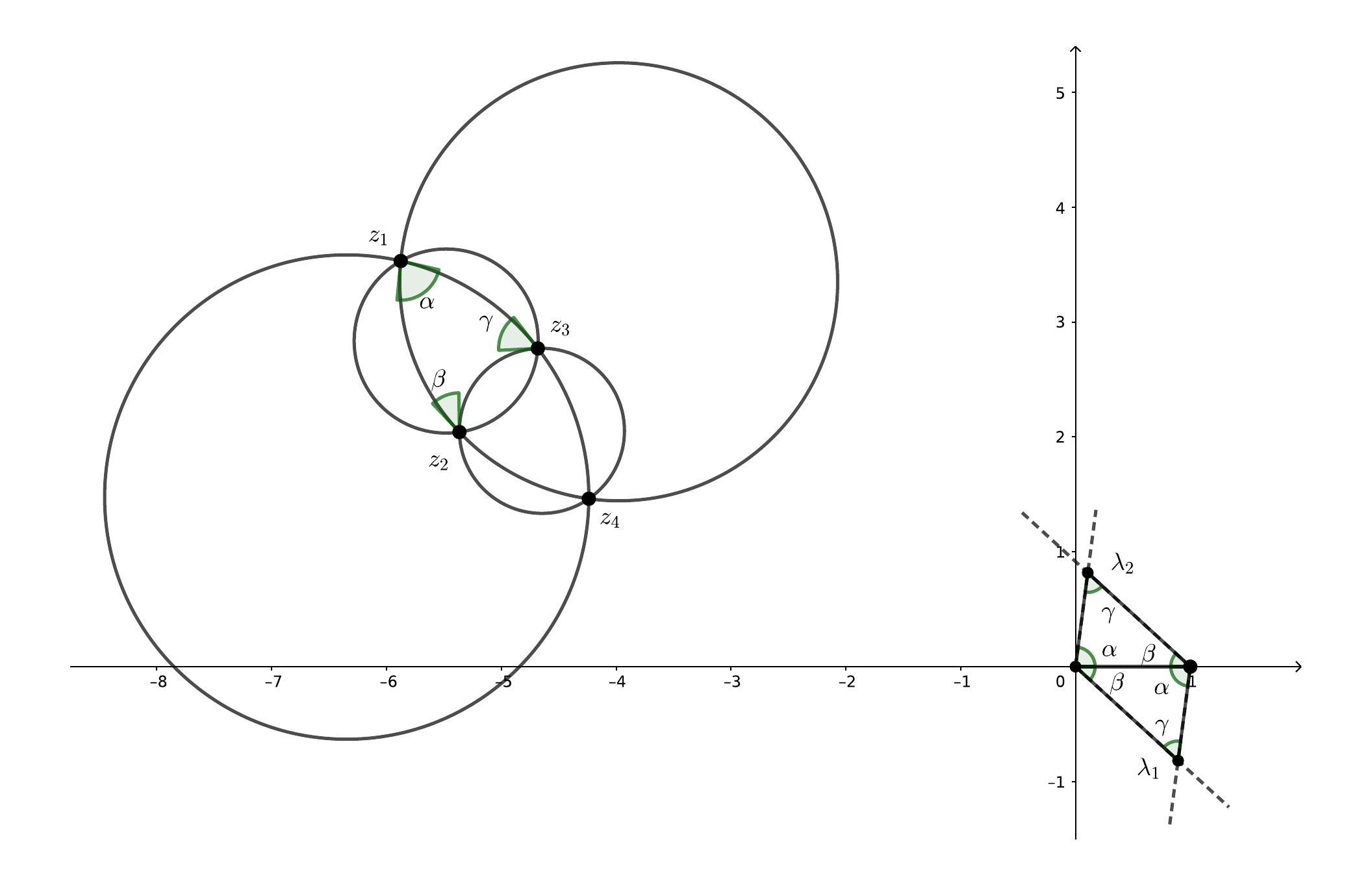}
\caption{Geometric construction of $\lambda_1=\chi(z_1,z_2,z_3,z_4)$
and $\lambda_2=\chi(z_1,z_2,z_4,z_3)$.}
\label{cross-ratio-ordered-finite} 
\end{center} 
\end{figure}

\section{Legendre and Weierstrass normal 
form.}\label{Section-Legendre-Weierstrass} Recall that if $p(x)$ is a 
cubic polynomial with complex coefficients and different roots, the 
Riemann surface $E\colon\ y^2=p(x)$ has the projection to the first 
coordinate as a meromorphic function of degree two which ramifies 
exactly in the roots of $p(x)$ and $\infty$ (see \cite[8.10]{Forster}), 
then, by the Corollary \ref{correspondence-triangles} every elliptic 
curve is isomorphic to one elliptic curve of that form. By the 
discussion in the previous section, two such elliptic curves $E_1\colon 
\ y^2=p(x)$, $E_2\colon \ y^2=q(x)$  are isomorphic if and only if the 
roots of $p(x)$ and $q(x)$ are equivalent or equivalently if their 
oriented triangles have the same angles, for non-degenerated triangles.

\par Given four points we can also take some cross ratio of them, so
we can take the elliptic curve as
\begin{equation}
 E\colon \quad y^2=x(x-1)(x-\lambda),\quad
 \lambda\in\mathbb{C}-\{0,1\}.
\end{equation}

This is  called a Legendre normal form of an elliptic curve. Two such 
elliptic curves $E_1\colon \ y^2=x(x-1)(x-\lambda_1)$, $E_2\colon \ 
y^2=x(x-1)(x-\lambda_2)$ are isomorphic if and only if the $\lambda_1$ 
and $\lambda_2$ are equivalent,  we explained in \ref{triangle-shapes} 
how to read geometrically into this equivalence: they are equivalent if 
and only if the oriented triangles with vertices at $0,1,\lambda_{1}$ 
and $0,1,\lambda_{2}$ are similar.

\par Another normal form is obtained considering the centroid $C$ of
three points $\{z_1,z_2,z_3\}$ in the complex plane to obtain three
points $e_i=z_i-C$ whose centroid is at 0, so the polynomial
$4(x-e_1)(x-e_2)(x-e_3)$ has the form $4x^3-g_2x-g_3$, then the
elliptic curve is given by:

\begin{equation}
 E\colon\quad  y^2=4x^3-g_2x-g_3, \quad \Delta=g_2^3-27g_3^2\neq 0,
\end{equation} where $\Delta=g_2^3-27g_3^2$ is the discriminant of the 
cubic polynomial. Recall $\Delta\neq 0$ if and only if the cubic 
polynomial has distinct roots. This kind of elliptic curves are called 
a Weierstrass normal form. Thus we can interpret geometrically the 
Weierstrass normal form as all the sets of  three points in the complex 
plane whose centroid is at $0$. As we know the centroid can be found 
geometrically, as the intersection of the three medians. As affine 
transformations preserve centroids, we have that two elliptic curves in 
the Weierstrass normal form are isomorphic if and only if the roots  of 
the polynomials differ by a complex factor.

\subsection{Some examples} To illustrate the above discussion we
give a couple of examples. First we compute a Weierstrass and a
Legendre normal form of the Fermat cubic in this setting:

\begin{example}\label{example-Fermat}
 The Fermat cubic is an elliptic curve given by $x^3+y^3=1$. We can
 take for example the meromorphic function $x+y$, it is of degree 2
 with critical values 0 and the cubic roots of 4, sending 0 to
 $\infty$ by $1/z$, we obtain that the Fermat cubic is the
 equilateral triangle with vertices in the cubic roots of $1/4$. In
 this case the centroid is already in the origin, then  the
 Weierstrass normal form is $y^2=4x^3-1$. Denote by $\rho$ the cubic
 root $\exp(2\pi i/3)$. To obtain the cross ratio hang the
 equilateral triangle on the interval $[0,1]$, the vertices are
 $\{0,1,-\rho\}$. So the Legendre normal form is
 $y^2=x(x-1)(x+\rho)$.
\end{example}

\par  For the next example we need elliptic curves given by quartic
equations. Given a quartic polynomial $q(x)$ with complex coefficients
and distinct roots, the Riemann surface $E\colon \ y^2=q(x)$ has the
projection to the first coordinate as a meromorphic function of
degree 2 whose critical values are exactly the roots of $q(x)$ (see
\cite[8.10]{Forster}). Note that the  curve $E$ has not smooth
projective closure in $\mathbb{CP}^2$.

\begin{example}\label{example-Steinmetz}
 In \cite[\S 2.3.5]{Steinmetz} is considered the following family of
 algebraic curves: $F\colon\quad x^n+y^m=1\quad (n\geq m\geq 2)$. For
 the cases (4,2), (3,3) and (3,2) they are the elliptic curves:
 
 \begin{equation*}
  x^4+y^2=1,\quad x^3+y^3=1,\quad x^3+y^2=1.
 \end{equation*}

 With our method we can see that the last two are isomorphic between
 them and non-isomorphic to the first. For that, consider the
 projection to the first coordinate on the curve $(n,m)=(4,2)$, it
 ramifies on the fourth roots of 1, as they are in the same circle,
 the cross ratio is real. The second case was calculated in the
 previous example, it corresponds to equilateral triangle. In the
 third case the projection to the first coordinate ramifies on the
 cubic roots of unity and $\infty$. By the previous discussion it
 follows the affirmation.
\end{example}

\section{The Jacobi and Edwards normal 
form}\label{Section-Jacobi-Edwards} Recall that every 4-point set
in the real line can be mapped by some Möbius function to four 
symmetric points respect to zero $\{a,-1/a,-a,1/a\}$. In fact, this can be done with every four points in the complex plane. 
Because, for $a\in \mathbb{C}-\{0\}$ such that $a^4\neq 1$, the cross 
ratio

\begin{equation}
 \varphi(a)=\chi\left(a,-\frac{1}{a},-a,\frac{1}{a}\right)=
 \left(\frac{1-a^2}{1+a^2}\right)^2
\end{equation}

\noindent is well defined, and as function of $a$ is meromorphic in all 
Riemann sphere, therefore take all values in 
$\widehat{\mathbb{C}}-\{0,1,\infty\}$ when $a^4\neq 1$ and $a\neq 0$. 
Then all elliptic curves are isomorphic to one elliptic curve in the 
form

\begin{equation}\label{jacobi2}
 y^2=(x^2-a^2)\left(a^2x^2-1\right),\quad \text{with}\ a^{4}\neq 1 
 \ \text{and}\ a\neq 0.
\end{equation}

The projection in the first coordinate is a meromorphic function of
degree 2 whose critical values are $\{a,-1/a,-a,1/a\}$. Note that
(\ref{jacobi2}) can be reduced to the curve:

\begin{equation}\label{jacobi-form}
 y^2=(x^2-1)\left(k^2x^2-1\right),\quad \text{with}\  
 k^2\neq \pm 1\ \text{and}\ k\neq 0,
\end{equation}

\noindent where $k=a^2$, this can be achieved applying the
transformation $\mu(z)=z/a$ to the points $\{a,-1/a,-a,1/a\}$. The
equation (\ref{jacobi-form}) is called a Jacobi normal form of an
elliptic curve. Therefore every elliptic curve is isomorphic to one
elliptic curve in the Jacobi normal form. In fact this elliptic curve
is isomorphic to the normal form defined by Edwards, see
\cite{Edwards}. The Edwards normal form is by definition:

\begin{equation}\label{edwards-form}
 x^2+y^2=a^2+a^2x^2y^2,\quad \text{with}\ a^{4}\neq 1 
 \ \text{and}\ a\neq 0,
\end{equation} 
in this case the projection to the first coordinate is of degree 2 and 
ramifies exactly on $\{a,-1/a,-a,1/a\}$ (see Figure 
\ref{symmetric-points}). 

\begin{figure}
\begin{center}
 \includegraphics[width=10cm]{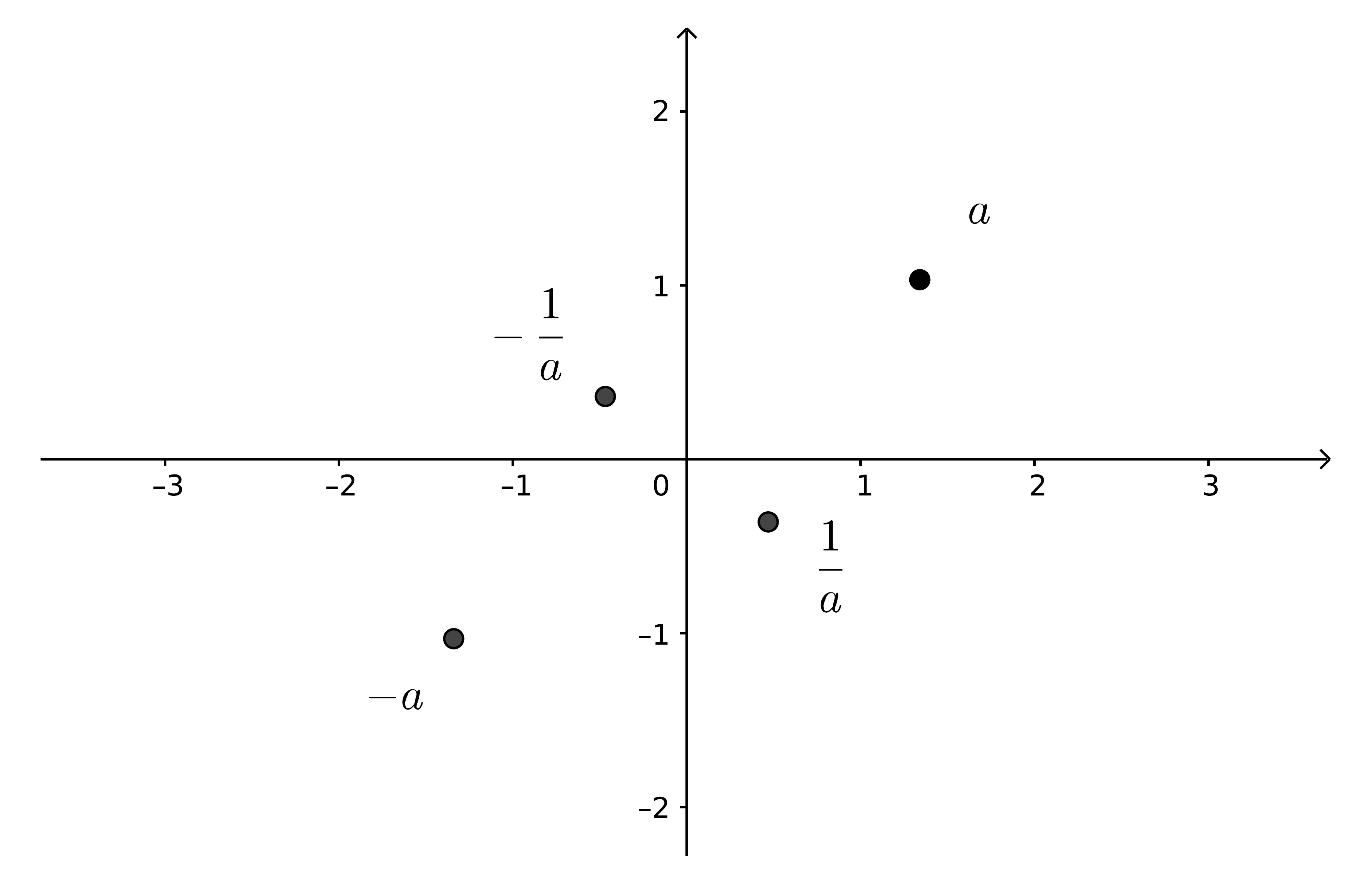}
\caption{A configuration of points for the Edwards normal 
form with $a=1.34023+1.032 i$.} \label{symmetric-points} \end{center} 
\end{figure} 

\par We summarize the previous discussion in the following theorem:

\begin{theorem}\label{Jacobi-Edwards-equivalence}
	Every elliptic curve $X$ is isomorphic to one elliptic curve in the 
	form
		
	\begin{equation*}
		y^2=(x^2-a^2)\left(a^2x^2-1\right),\quad \text{with}\ a^{4}\neq 1 
		\ \text{and}\ a\neq 0,
	\end{equation*}
	
	\noindent which is isomorphic to the elliptic curve in the Edwards 
	normal form (\ref{edwards-form}) and isomorphic to the elliptic 
	curve in the Jacobi normal form (\ref{jacobi-form}) with $k=a^{2}$.
\end{theorem}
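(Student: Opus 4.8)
The plan is to reduce all three isomorphisms to the correspondence of Theorem \ref{the-correspondence}, which says that a curve of the form $y^2=q(x)$ is determined up to isomorphism by the Möbius-equivalence class of the critical-value set of its degree two projection to the first coordinate. Thus it suffices to check, in each case, that the relevant projection has critical values forming the symmetric set $\{a,-1/a,-a,1/a\}$ (possibly after a Möbius transformation), and then invoke the correspondence.

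First I would establish that every elliptic curve is isomorphic to one of the form (\ref{jacobi2}). By Theorem \ref{the-correspondence}, $X$ corresponds to some 4-point set $S$ on the Riemann sphere, which has a well-defined cross ratio $\lambda\in\widehat{\mathbb{C}}-\{0,1,\infty\}$. Since the meromorphic function $\varphi(a)=\left(\tfrac{1-a^2}{1+a^2}\right)^2$ attains every value of $\widehat{\mathbb{C}}-\{0,1,\infty\}$ for a suitable $a$ with $a\neq 0$ and $a^4\neq 1$ (as noted just before the statement), we can choose $a$ so that $\chi(a,-1/a,-a,1/a)=\lambda$. Hence $S$ is Möbius-equivalent to $\{a,-1/a,-a,1/a\}$. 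The roots of $(x^2-a^2)(a^2x^2-1)$ are precisely $\{a,-a,1/a,-1/a\}$, and since this is a quartic the projection to the first coordinate ramifies exactly over these four points (with no branch point at infinity); by the correspondence, $X$ is isomorphic to (\ref{jacobi2}).

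Next I would treat the Edwards form. Solving (\ref{edwards-form}) for $y^2$ gives $y^2=(a^2-x^2)/(1-a^2x^2)$, and the projection to the $x$-coordinate is again of degree two. Its critical values occur exactly where the fiber degenerates, namely at the zeros $x=\pm a$ of the numerator and the zeros $x=\pm 1/a$ of the denominator. Thus the Edwards curve has the same critical-value set $\{a,-1/a,-a,1/a\}$ as (\ref{jacobi2}), and the two are isomorphic by Theorem \ref{the-correspondence}. Finally, for the Jacobi form I would apply the affine map $\mu(z)=z/a$, which is a Möbius transformation sending $\{a,-1/a,-a,1/a\}$ to $\{1,-1,1/a^2,-1/a^2\}$. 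Writing $k=a^2$, this is exactly the root set $\{\pm 1,\pm 1/k\}$ of $(x^2-1)(k^2x^2-1)$, so the critical-value set of the Jacobi curve (\ref{jacobi-form}) is Möbius-equivalent to that of (\ref{jacobi2}), giving the last isomorphism.

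I expect the main subtlety to be the precise treatment of the branch points, in particular verifying on the smooth models of these (projectively singular) quartic curves that there is no extra ramification at infinity, and checking that the conditions $a\neq 0$ and $a^4\neq 1$ are exactly what guarantees the four critical values $\{a,-1/a,-a,1/a\}$ are distinct (so that $y^2=q(x)$ really defines a smooth genus one curve rather than a degenerate one). Once the branch sets are correctly identified as these four points, each of the three isomorphisms is immediate from the correspondence.
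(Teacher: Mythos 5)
Your proposal is correct and takes essentially the same route as the paper: both arguments use the surjectivity of $\varphi(a)=\left(\frac{1-a^2}{1+a^2}\right)^2$ onto $\widehat{\mathbb{C}}-\{0,1,\infty\}$ to reduce to the symmetric configuration $\{a,-1/a,-a,1/a\}$, identify this set as the branch locus of the degree two first-coordinate projections of both (\ref{jacobi2}) and the Edwards curve (\ref{edwards-form}), and pass to the Jacobi form (\ref{jacobi-form}) with $k=a^2$ via the Möbius map $\mu(z)=z/a$, invoking the correspondence of Theorem \ref{the-correspondence}. The subtlety you flag about ramification at infinity on the quartic models is exactly the point the paper disposes of by citing Forster, so your treatment matches the paper's in substance and in detail.
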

For different approach of the above equivalence consult \cite{Edwards}.

\section{The $J$ invariant and the Hesse normal 
form}\label{Section-Hesse} We say that $\lambda$ and $\lambda'\in 
\widehat{\mathbb{C}}-\{0,1,\infty\}$ are equivalent if 
$\{0,1,\lambda,\infty\}$ and $\{0,1,\lambda',\infty\}$ are Möbius 
equivalent, it occurs if and only if there is a Möbius transformation 
$\mu$ such that permutes $\{0,1,\infty\}$ and satisfies 
$\mu(\lambda)=\lambda'$. Computing these cross ratios, we obtain that 
the points equivalent to $\lambda$ are:

\begin{equation}\label{equivalent-cross-ratios}
 \lambda,\quad \frac{1}{\lambda},\quad \lambda-1,\quad
 \frac{1}{\lambda-1},\quad \frac{\lambda}{\lambda-1},\quad
 \frac{\lambda-1}{\lambda}.
\end{equation}

Define the rational function

\begin{equation}\label{j-invariant}
 J(\lambda)=\frac{(\lambda^2-\lambda+1)^3}{\lambda^2(\lambda-1)^2},
\end{equation}

\noindent this is a rational map of degree 6 called the $J$
invariant, it is an invariant because $\lambda$ is equivalent to
$\lambda'$ if and only if $J(\lambda)=J(\lambda')$ (our definition of
$J$ is according to  \cite[\S 6.3.2]{Donaldson}, it differs from the
definition of other authors by some factor). Then, the $J$ invariant
is a function well-defined in $\mathcal{M}$ (viewed as four points on
the sphere modulo Möbius transformations), the value of $J$ in a set
of  four points $\{z_1, z_2, z_3, z_4 \}$ is obtained by
$J(\lambda)$, where $\{0,1,\lambda,\infty\}$ is a 4-point set
equivalent to $\{z_1, z_2,  z_3, z_4 \}$.

\par In the following diagram we represent the critical values and
the critical points of $J$, the weight of the arrows represent the
multiplicity of the critical point:

\begin{equation}\label{branching-diagram}
 \xymatrix{ 0\ar^2[rd] & 1\ar^2[d] & \infty\ar_2[dl] &
 2\ar^2[rd] & \frac{1}{2}\ar^2[d] & -1\ar_2[ld] &
 -\rho\ar^3[d]  & -\rho^2\ar_3[ld] \\ & \infty & & &
 \frac{3^3}{4} & & 0 & }
\end{equation}

\noindent where $\rho$ is the cubic root $\exp(2\pi i/3)$. Note that
the Fermat cubic is a critical point of $J$, which corresponds to an
equilateral triangle. The $J$ invariant can be factorized by the
following rational functions (see \cite[p. 93]{Donaldson}):

\begin{equation}\label{decomposition}
 \widehat{\mathbb{C}}\xrightarrow{\frac{\rho z+\rho^2}{z+\rho^2}}
 \widehat{\mathbb{C}}\xrightarrow{z^3+\frac{1}{z^3}}
 \widehat{\mathbb{C}} \xrightarrow{-\frac{3^3}{z-2}}
 \widehat{\mathbb{C}},
\end{equation}

\par \noindent (the constant $-3^3$ in the last function in
(\ref{decomposition}) differs from the constant that appear in
\cite[p. 93]{Donaldson}, which seems to contain a mistake).

\par Consider the elliptic curve in the form

\begin{equation}
 x^3+y^3+1=3kxy, \quad k\in \mathbb{C}\ \text{with}\ k^3\neq 1,
\end{equation}
called the Hesse normal form. As in Example \ref{example-Fermat} we take 
the function $x+y$, this is also of degree two with critical values on 
$-k$ and on the roots of the cubic polynomial

\begin{equation}
 z^3-3kz^2+4
\end{equation}

\noindent this polynomial has discriminant $\Delta = 4^23^3(k^3-1)$,
then by our hypothesis there are three distinct roots. So we can take
the value of the $J$ invariant in the set of critical values,
obtaining a function $\varphi(k)$ depending on $k$, and well defined
in $\widehat{\mathbb{C}}-\{1,\rho,\rho^2,\infty\}$. Since the roots
of a polynomial depends continuously of the coefficients,
$\varphi(k)$ is a continuous function on
$\widehat{\mathbb{C}}-\{1,\rho,\rho^2,\infty\}$. And it is also
analytic in $\mathbb{C}-\{0,1,\rho,\rho^2\}$, as we can see looking
at the general formulas of cubic equations (taking locally suitable
branches of the roots) and taking the formula (\ref{cross-ratio})
with $z_4=-k$:

\begin{equation}\label{z-nu}
 z_\nu=k-\rho^\nu\alpha-\frac{1}{\rho^\nu}\frac{k^2}{\alpha},\quad
 \nu=1,2,3,
\quad \text{where}\ \alpha= \sqrt[3]{2-k^3+2i\sqrt{k^3-1}}.
\end{equation}

\par Let us analyse what is happening in $\infty$ and in the cubic
roots of 1. If $k_n$ is a sequence such that $k_n\to \rho^\nu$, again
as the roots depend continuously of the coefficients, for each $n$,
we can choose a labelling of the roots of $z^2-3k_nz^2+4$ such that
the cross ratios tends to $\infty$, then $\varphi(k_n)\to \infty$.

\par On the other hand, if $k_n\to \infty$, we affirm that
$\varphi(k_n)\to \infty$. Here is a sketch of the proof: considering
the equation (\ref{z-nu}), we have that $\alpha/k$ tends to
$-1,-\rho$ or $-\rho^2$, when $k\to \infty$, depending of the choice
of the branch of the cubic root in $\alpha$. With this, it is
straightforward to check that $\chi(z_1,z_2,z_3,-k)$ tends to $0,1$
or $\infty$, when $k\to \infty$, according to the choice of the
branch. Then, if we take for each $n$, $k=k_n$ in (\ref{z-nu}) we
have that the cross ratios $\chi(z_1,z_2,z_3,-k_n)$ accumulate on an
subset of $\{0, 1,\infty\}$ when $k_n\to \infty$. In any case $J$
tends to $\infty$.

\par Then $\varphi(k)$ is a continuous function on all the Riemann
sphere. Hence a rational function, whose poles are $\infty$ and the
cubic roots of 1.

\par Hence, for every $\lambda \in \widehat{\mathbb{C}}-\{0,1,\infty\}$
there exists $k\in \mathbb{C}$ with $k^3\neq 1$, such that
$\varphi(k)=J(\lambda)$, then $\lambda$ is equivalent to one cross
ratio of the set formed by $-k$ and the roots of $z^3-3k z^2+4$. 

\par We summarize the previous discussion in the following theorem:

\begin{theorem}\label{Theorem-Hesse}
 Every elliptic curve $X$ is isomorphic to one elliptic curve given in 
 the Hesse normal form: 
 
 \begin{equation}\label{hesse-normal-form} x^3+y^3+1=3kxy, \quad k\in 
 \mathbb{C}\ \text{with}\ k^3\neq 1,
 \end{equation} 
 
 \noindent which is isomorphic to the elliptic curve in the form
 
 \begin{equation}\label{new-form}
  y^2=(x+k)(x^3-3kx^2+4),\quad k\in \mathbb{C}\ \text{with}\ k^3\neq 1. 
 \end{equation} 
 \end{theorem}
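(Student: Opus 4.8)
The plan is to assemble the theorem from the analysis of $\varphi(k)$ carried out just above, together with Theorem \ref{the-correspondence} and the fact that $J$ separates equivalence classes of cross ratios (\ref{equivalent-cross-ratios}). First I would settle the second isomorphism, which needs no surjectivity. On the Hesse curve (\ref{hesse-normal-form}) the function $x+y$ is meromorphic of degree two with critical values $-k$ and the three roots of $z^3-3kz^2+4$, while on the curve (\ref{new-form}) the projection to the first coordinate is meromorphic of degree two with critical values exactly the roots of the quartic $(x+k)(x^3-3kx^2+4)$. Since the cubic has discriminant $4^23^3(k^3-1)\neq 0$ it has three distinct roots, and substituting $z=-k$ gives $4(1-k^3)\neq 0$, so $-k$ is not among them; hence both maps carry the same $4$-point set of critical values, and by Theorem \ref{the-correspondence} the two curves are isomorphic.

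For the first and main assertion I would use surjectivity. Let $X$ be an arbitrary elliptic curve. By Corollary \ref{correspondence-triangles} we may realize $X$ by a cross ratio $\lambda\in\widehat{\mathbb{C}}-\{0,1,\infty\}$; put $t=J(\lambda)$. The preceding discussion shows that $\varphi$ extends to a function continuous on all of $\widehat{\mathbb{C}}$ and analytic off $\{0,1,\rho,\rho^2,\infty\}$, hence, by removability of the singularities, a rational map $\varphi\colon\widehat{\mathbb{C}}\to\widehat{\mathbb{C}}$; it is non-constant since it attains $\infty$ at the cubic roots of $1$ while taking finite values elsewhere. A non-constant rational map is surjective, so there is $k\in\mathbb{C}$ with $k^3\neq 1$ and $\varphi(k)=t=J(\lambda)$. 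Because $J(\lambda)=J(\lambda')$ exactly when $\lambda,\lambda'$ are equivalent, the $4$-point set consisting of $-k$ and the roots of $z^3-3kz^2+4$ is Möbius equivalent to $\{0,1,\lambda,\infty\}$. By Theorem \ref{the-correspondence} the elliptic curve carrying these four critical values is isomorphic to $X$, and by the first paragraph this curve is precisely the Hesse curve (\ref{hesse-normal-form}) for this $k$, as well as (\ref{new-form}).

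I expect the main obstacle to be the surjectivity of $\varphi$, which rests entirely on promoting it from a map defined a priori only where the four branch points stay distinct to a genuine rational map on the whole sphere. The delicate points are the cubic roots of unity and $\infty$, where the critical values collide or the labelling degenerates; there one must check that $J$ of the (possibly only accumulating) configuration still tends to $\infty$, so that $\varphi$ extends continuously with the value $\infty$. Once this continuity on the compact sphere is granted, rationality follows from complex analysis, surjectivity is automatic, and everything else reduces to the bookkeeping with $4$-point sets and the $J$ invariant carried out above.
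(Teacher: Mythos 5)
Your proposal is correct and follows essentially the same route as the paper: the second isomorphism by matching the four critical values of $x+y$ on the Hesse curve with those of the first-coordinate projection on (\ref{new-form}), and the first by promoting $\varphi$ to a rational (hence, being non-constant, surjective) map on $\widehat{\mathbb{C}}$ with poles exactly at $\infty$ and the cube roots of unity, combined with the fact that $J$ separates equivalence classes of cross ratios. Your explicit check that $-k$ is not a root of $z^3-3kz^2+4$ (the value there is $4(1-k^3)\neq 0$) is a small verification the paper leaves implicit, but otherwise the two arguments coincide, including the reliance on the limit analysis at $\rho^\nu$ and at $\infty$.
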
 

\begin{figure}
 \begin{center}
  \includegraphics[width=12cm]{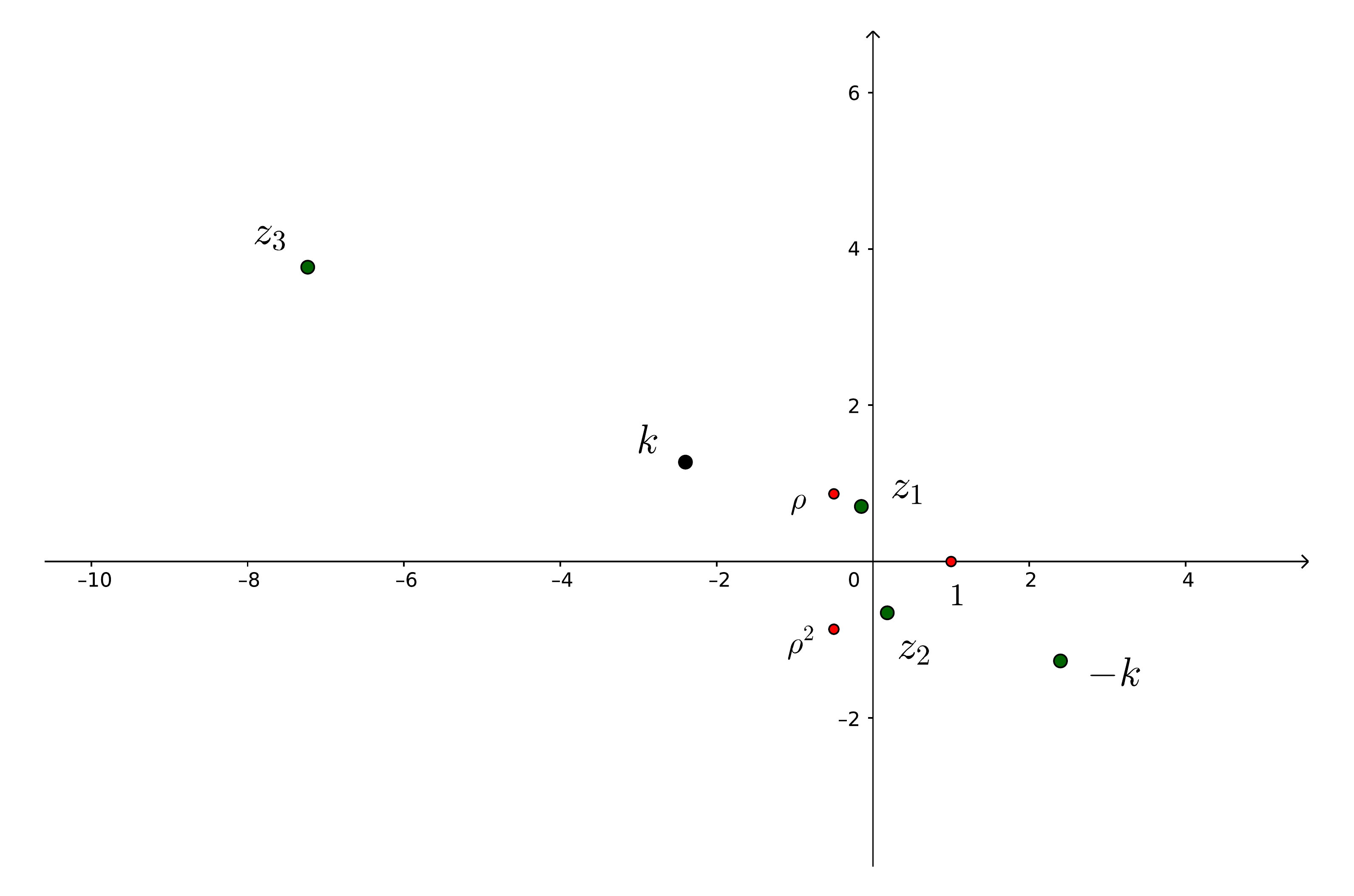}
\caption{A configuration of points for the Hesse normal form with
$k=-2.39882+1.27189 i$.}
\label{configuration-Hesse}
\end{center}
\end{figure} 

For different approaches consult \cite{Bonifant-Milnor}, \cite{Frium} 
and \cite{Popescu}. According to the formula obtained in \cite{Popescu} 
or in \cite{Bonifant-Milnor} the rational function $\varphi(k)$ should 
be:

\begin{equation}
\varphi(k)=\frac{27}{4}\left(\frac{k(k^3+8)}{4(k^3-1)}\right)^3. 
\end{equation}

\FloatBarrier

\end{document}